\newcommand{\bu}{\boldsymbol{u}}
\newcommand{\Parikh}[1][\kern0.3ex]{\mathrm{P}_{\kern-0.3ex#1}}
\newcommand{\N}{\mathbb{N}}
\newcommand{\Z}{\mathbb{Z}}
\newcommand{\R}{\mathbb{R}}
\newcommand{\A}{\mathcal{A}}
\newcommand{\B}{\mathcal{B}}
\newtheorem{thm}{Theorem}
\newtheorem{prop}[thm]{Proposition}
\newtheorem{lem}[thm]{Lemma}
\theoremstyle{definition}
\newtheorem{de}[thm]{Definition}
\newtheorem{ex}[thm]{Example}
\title{\bf Note on BDL property of fixed points of primitive morphisms}
\author{Petr Ambrož \qquad Edita Pelantová \\[2mm]
\texttt{\{petr.ambroz,edita.pelantova\}@fjfi.cvut.cz}\\[1mm]
Faculty of Nuclear Science and Physical Engineering\\ Czech Technical University in Prague}
\date{}
\begin{document}
\maketitle
\allowdisplaybreaks
\begin{abstract} We consider an infinite word $\bu$ fixed by a primitive morphism. We show a necessary condition under which $\bu$ has a non-trivial geometric representation which is bounded distance equivalent to a lattice.  

\medskip

\noindent\textbf{Keywords:} bounded distance equivalence, symbolic sequences, fixed points of substitutions

\medskip

\noindent\textbf{MSC2020:} 68R15, 52C23
\end{abstract}

\section{Introduction}

In this paper we continue our study~\cite{AmMaPe-BDL1} of geometric representations of bi-directional infinite 
words $\bu=\cdots u_{-2}u_{-1}|u_0u_{1}u_{2}\cdots$ over a finite alphabet $\A$. Here, the term 'geometric representation' 
stands for an infinite discrete set $\{x_n: n \in \mathbb{Z}\}\subset \mathbb{R}$, where $(x_n)_{n\in\Z}$ is a strictly 
increasing sequence such that if $u_n=u_m$ then $x_{n+1}-x_n=x_{m+1}-x_m$. In particular, as the alphabet $A$ is finite, 
we have only finitely many different gaps between neighbors in the sequence $(x_n)_{n\in\Z}$. 

Obviously, any lattice in $\R$, i.e., any set $\eta\Z$ with $\eta\in\R$, $\eta\neq0$, is a geometric representation of 
any infinite word $\bu$. Such representation is called trivial as every letter of $\bu$ is represented by the same gap 
(gap of the same length).

We are interested in the following problem: is there a geometric representation of $\bu$ which is not a lattice per se, 
but which is (in a way) similar to a lattice? This desired similarity can be formally expressed by the notion of bounded 
distance equivalence (to a lattice).  

\begin{de}
  We say that a set $\Lambda\subset \R$ is bounded distance equivalent to a lattice $L\subset \R$,  if there exist a constant $C$ and a bijection
  ${g}:\Lambda\to L$ such that $|x-g(x)|<C$ for all $x\in\Lambda$. 
  In short, we say that such $\Lambda$ has BDL property.
\end{de}

The notion 'bounded distance equivalent to a lattice' has been introduced when studying diffraction properties of sets modelling non-crystallographic materials (see e.g.~\cite{Baranidharan}). Recently, the question of whether the so-called Delone sets have BDL property has been considered. Several authors considered Delone sets arising via the cut-and-project method~\cite{DuOg,Fret1,haynes-etds-36} as well as Delone sets arising from substitution tilings~\cite{aliste-prieto-aihp-30,Fret1,solomon-ijm-181}. 

In~\cite{AmMaPe-BDL1} we proved two sufficient conditions for an infinite word $\bu$ to have a non-trivial geometric representation with BDL property. We showed that if $\bu$ is the so-called balanced word then it has a non-trivial BDL geometric representation. The second sufficient condition has been formulated for a class of infinite words -- for fixed points of primitive substitutions. In this paper, we prove (cf.\ Theorem~\ref{prop:BDL_repr_of_fixed_pointSufficient}) a necessary condition for words in the latter class.

\section{Preliminaries}

Let $\A$ be a finite alphabet. The set of finite words over $\A$, equipped with the operation of concatenation and the empty word $\epsilon$ as the neutral element, is a monoid, which we denote by $\A^*$.  We will also consider infinite words, namely 
bi-directional infinite words $\bu=\cdots u_{-2}u_{-1}|u_0u_1u_2\cdots\in\A^\Z$. The delimiter $|$ is important when deciding whether two bi-directional infinite words coincide.

If a word $u$ (finite or infinite) is written as $u=vwv'$ for some (possibly empty) words $v,w,v'$, then $v$ is a prefix, $v'$ a suffix and $w$ a factor of $u$. In particular, we denote by $u_{[i,j)}=u_iu_{i+1}\cdots u_{j-1}$.  The length of a finite word $w=w_1\cdots w_n$ is denoted by $|w|=n$. The number of letters $a\in\A$ occurring in the word $w$ is denoted by $|w|_a$. For a finite word $w$ over an alphabet $\A=\{a_1,\dots,a_d\}$ we define its Parikh vector $\Parikh(w)=(|w|_{a_1},\dots,|w|_{a_d})^{\mathrm{T}}$.

A mapping $\psi:\A^*\to\B^*$ is a morphism, if $\psi(wv)=\psi(w)\psi(v)$ for any $w,v\in\A^*$. The action of a morphism is naturally extended to infinite words $\bu\in\A^\Z$, by
\[
\psi(\cdots u_{-2}u_{-1}|u_0u_1u_2\cdots)=\cdots\psi(u_{-2})\psi(u_{-1})|\psi(u_0)\psi(u_1)\psi(u_2)\cdots.
\]
If $\A$ and $\B$ coincide, i.e., $\psi:\A^*\to\A^*$, and, moreover, there are letters $a, b \in \A$ and non-empty words $v,w\in\A^*$ such that $\psi(a) = aw$ and $\psi(b) = vb$ then $\psi$ is called substitution. Every substitution has at least one fixed point, namely
\[
\cdots\psi^3(v)\psi^2(v)\psi(v)vb|aw\psi(w)\psi^2(w)\psi^3(w)\cdots.
\]

To any morphism $\psi:\A^*\to\B^*$, we associate its incidence matrix $M_\psi$. Its rows and columns are indexed by $b\in\mathcal{B}$ and $a\in\mathcal{A}$, respectively.  We define $(M_\psi)_{ba}=|\psi(a)|_b$.  Given a finite word $w\in\A^*$, the Parikh vector $\Parikh(\psi(w))$ of its image $\psi(w)$ can be calculated from the Parikh vector $\Parikh(w)$ of $w$ by $\Parikh(\psi(w))=M_\psi\Parikh(w)$.

\section{Geometric representation of infinite words}

For the definition of a geometric representation of an infinite word $\bu$ we use the
Parikh vectors of its prefixes. Denote for simplicity   
\begin{equation*}
  \Parikh[n](\bu):=\left\{\begin{array}{@{}r@{}l@{\quad}l}
           \Parikh & (u_{[0,n)}) & \text{if } n\geq 0, \\[2mm]
           -\Parikh & (u_{[n,0)}) & \text{otherwise}.
  \end{array}\right.
\end{equation*}

\begin{de}\label{d:geomrep}
  Let $\bu=\cdots u_{-2}u_{-1}|u_0u_1u_2\cdots$ be an infinite word over a finite alphabet
  $\A=\{1,\ldots,d\}$ and let $\ell_1,\ldots,\ell_d$ be positive numbers.  Set
  \[
  x_n:=(\ell_1,\dots,\ell_d)\Parikh[n](\bu).
  \]
  The discrete  set $\Lambda_{\bu}:=\{x_n : n\in\Z\}$ is called a geometric representation of $\bu$
  defined by the lengths $\ell_1,\ldots,\ell_d$.  We say that the geometric representation is
  non-trivial if the set of lengths has at least two elements.
\end{de}

\begin{ex}
Geometric representation of (a part of) the infinite word $\bu=\cdots CBCBCB|CBACC\cdots$
can be found in Figure~\ref{fig:geom_rep}. The lengths $\ell_1,\ell_2,\ell_3$ correspond to letters $A$, $B$, $C$, respectively. Let us compute several points of $\Lambda_{\bu}$:
\begin{itemize}
\item 
    $u_{[0,1)}=C$ $\Rightarrow$  
    $\Parikh[1](\bu)=\Parikh(u_{[0,1)})
    =\left(\begin{smallmatrix}0\\0\\1\end{smallmatrix}\right)$
    $\Rightarrow$ $x_1=\ell_3$,
\item
    $u_{[0,2)}=CB$ $\Rightarrow$
    $\Parikh[2](\bu)=\Parikh(u_{[0,2)})
    =\left(\begin{smallmatrix}0\\1\\1\end{smallmatrix}\right)$
    $\Rightarrow$ $x_2=\ell_2 + \ell_3$,
\item
    $u_{[-3,0)}=BCB$ $\Rightarrow$
    $\Parikh[-3](\bu)=-\Parikh(u_{[-3,0)})
    =\left(\begin{smallmatrix}0\\-2\\-1\end{smallmatrix}\right)$
    $\Rightarrow$ $x_{-3}=-2\ell_2 - \ell_3$.
\end{itemize}
\end{ex}

\begin{figure}[!htp]
    \centering
    \begin{tikzpicture}[x=2cm,y=1.5cm]
		\draw [|-|] (-1.15,0) -- node [anchor = south] {C} (-0.4,0);
		\draw [-|] (-0.4,0) -- node [anchor = south] {B} (0,0);
		\draw [-|] (0,0) -- node [anchor = south] {C} (0.75,0);
		\draw [-|] (0.75,0) -- node [anchor = south] {B} (1.15,0);
        \draw [-|] (1.15,0) -- node [anchor = south] {C} (1.9,0);
        \draw [-] (1.9,0) -- node [anchor = south] {B} (2.3,0);
        \draw [-|] (2.3,0) -- node [anchor = south] {C} (3.05,0);
        \draw [-|] (3.05,0) -- node [anchor = south] {B} (3.45,0);
        \draw [-|] (3.45,0) -- node [anchor = south] {A} (4.65,0);
        \draw [-|] (4.65,0) -- node [anchor = south] {C} (5.4,0);
        \draw [-|] (5.4,0) -- node [anchor = south] {C} (6.15,0);
        \draw (2.3,0.1) -- (2.3,-0.1) node [anchor = north] {0};
        \draw [thick,decorate,decoration={brace,mirror,raise=4pt}] (3.05,0) -- node [below=6pt] {$\ell_2$} (3.44,0);
		\draw [thick,decorate,decoration={brace,mirror,raise=4pt}] (3.46,0) -- node [below=6pt] {$\ell_1$} (4.64,0);
		\draw [thick,decorate,decoration={brace,mirror,raise=4pt}] (4.66,0) -- node [below=6pt] {$\ell_3$} (5.4,0);
	\end{tikzpicture}
	\caption{Illustration of a geometric representation of an infinite word}
	\label{fig:geom_rep}
\end{figure}
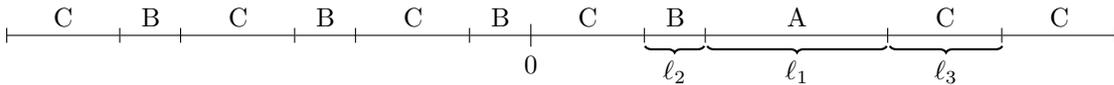

Let $\bu=\cdots u_{-2}u_{-1}|u_0u_1u_2\cdots$ be an infinite word over $\A=\{1,\ldots,d\}$, $\Lambda_{\bu}:=\{x_n : n\in\Z\}$ its geometric representation defined by the lengths $\ell_1,\ldots,\ell_d$. Let us assume that $u_m=u_n=i$ for some $m,n\in\Z$ and $i\in\A$. Then 
\begin{align}\label{eq:xn1-xn}
x_{n+1} &= (\ell_1,\ldots,\ell_d)\Parikh[n+1](\bu)
= (\ell_1,\ldots,\ell_d)
\left(\Parikh[n](\bu)+\vec{e}_i\right)=
(\ell_1,\ldots,\ell_d)\Parikh[n](\bu)+\ell_i = x_n + \ell_i,
\intertext{and} \label{eq:xm1-xm}
x_{m+1} &= (\ell_1,\ldots,\ell_d)\Parikh[m+1](\bu)
= (\ell_1,\ldots,\ell_d)
\left(\Parikh[m](\bu)+\vec{e}_i\right)=
(\ell_1,\ldots,\ell_d)\Parikh[m](\bu)+\ell_i = x_m + \ell_i,
\end{align}
where $\vec{e}_i$ is $i$-th vector of the standard basis of $\R^d$, i.e., the $d$-dimensional vector, whose only non-zero coordinate is $e_i=1$.
This shows that the set $\Lambda_{\bu}$, given by Definition~\ref{d:geomrep}, meets the condition on a geometric representation of an infinite word.
Indeed, equations~\eqref{eq:xn1-xn} and~\eqref{eq:xm1-xm} show that the sequence 
$(x_n)_{n\in \mathbb Z}$  has the property: if $u_n = u_m$ then $x_{n+1} - x_n = x_{m+1} - x_m$ for all $m,n\in\Z$.

\medskip

In~\cite{AmMaPe-BDL1} authors gave following reformulation of BDL property of a geometric
representation of an infinite word in terms of Parikh vectors of its prefixes.

\begin{lem}\label{l:vrstva}
  Let $\bu$ be a bi-directional infinite word over an alphabet $\A=\{1,\ldots,d\}$, $\eta \in
  \mathbb{R}$ and let $\ell_1, \ell_2,\dots,\ell_d$ be a list of positive numbers containing at
  least two distinct elements. Then the geometric representation of $\bu$ with lengths
  $\ell_1,\ldots,\ell_d$ is bounded distance equivalent to the lattice $\eta\Z$ if and only if for
  some constant $C$
  \[
  \Big|(\ell_1-\eta,\dots,\ell_d-\eta)\Parikh[n](\bu)\Big|<C\qquad\text{ for every } n\in\Z.
  \]
\end{lem}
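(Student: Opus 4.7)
The starting observation is that the coordinates of $\Parikh[n](\bu)$ always sum to $n$: this equals $|u_{[0,n)}|=n$ for $n\ge 0$ and $-|u_{[n,0)}|=n$ for $n<0$. I would use this to rewrite
\[
(\ell_1-\eta,\dots,\ell_d-\eta)\Parikh[n](\bu) \;=\; (\ell_1,\dots,\ell_d)\Parikh[n](\bu) - \eta n \;=\; x_n - \eta n,
\]
so the lemma reduces to the statement that $\Lambda_{\bu}=\{x_n:n\in\Z\}$ is bounded distance equivalent to $\eta\Z$ if and only if $\sup_{n\in\Z}|x_n-\eta n|<\infty$. I also record for later use that $\Lambda_{\bu}$ is uniformly discrete, since all its consecutive gaps lie in $[\ell_{\min},\ell_{\max}]$ with $\ell_{\min}=\min_i\ell_i>0$.

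The ``if'' direction is straightforward: assuming $\sup_n|x_n-\eta n|<C$, I would take $g(x_n):=\eta n$, which is a well-defined bijection $\Lambda_{\bu}\to\eta\Z$ by strict monotonicity of $(x_n)$, and $|g(x_n)-x_n|=|x_n-\eta n|<C$ witnesses the BDL property.

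The substantive direction is ``only if''. Given a bijection $g\colon\Lambda_{\bu}\to\eta\Z$ with $|g(x)-x|<C$ for all $x\in\Lambda_{\bu}$ (and assuming WLOG $\eta>0$), the plan is to first establish a uniform density estimate
\[
\bigl||\Lambda_{\bu}\cap I|-|\eta\Z\cap I|\bigr|\le K\qquad\text{for every bounded interval }I\subset\R,
\]
with $K$ depending only on $C,\eta,\ell_{\min}$. This follows by sandwiching: $g$ injects $\Lambda_{\bu}\cap I$ into $\eta\Z\cap I^{+C}$ and $g^{-1}$ injects $\eta\Z\cap I$ into $\Lambda_{\bu}\cap I^{+C}$, where $I^{+C}$ denotes the $C$-neighbourhood of $I$; uniform discreteness of both sets then bounds the number of ``extra'' points in $I^{+C}\setminus I$ by a constant. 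Applying the estimate to $I=[0,x_n]$ for $n\ge 0$, where $|\Lambda_{\bu}\cap I|=n+1$ (since $x_0=0$) and $|\eta\Z\cap I|=\floor{x_n/\eta}+1$, gives $|n-\floor{x_n/\eta}|\le K$ and hence $|x_n-\eta n|\le\eta(K+1)$; the symmetric choice $I=[x_n,0]$ handles $n<0$.

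The main obstacle I anticipate is conceptual rather than computational: the given bijection $g$ need not preserve the order of the two sets, so one cannot simply identify $g(x_n)$ with ``the'' lattice point nearest $\eta n$ and read off the estimate. The density/counting argument circumvents this because interval counts are insensitive to any reshuffling by $g$; all that remains is careful bookkeeping of the $O(1)$ boundary contributions when passing from $I$ to $I^{+C}$, which is routine given the Delone structure of $\Lambda_{\bu}$ and $\eta\Z$.
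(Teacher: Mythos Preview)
Your proof is correct, and its algebraic core coincides with the paper's: both reduce the lemma to the statement that $\Lambda_{\bu}$ is BDL to $\eta\Z$ if and only if $\sup_{n}|x_n-\eta n|<\infty$, via the identity $n=(1,\dots,1)\Parikh[n](\bu)$.

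The difference is in how the two handle that reduced statement. The paper treats the equivalence between ``there exists a bijection $g:\Lambda_{\bu}\to\eta\Z$ with $|x-g(x)|<C$'' and ``$|x_n-\eta n|<C$ for all $n$'' as a known fact, dispatching it with a single ``Recall that\dots'' and giving no argument for the only-if direction. You, by contrast, actually prove it: the counting argument (sandwiching $|\Lambda_{\bu}\cap I|$ against $|\eta\Z\cap I|$ using that $g$ and $g^{-1}$ move points by at most $C$, then specialising to $I=[0,x_n]$) is a clean and standard way to pass from an arbitrary bounded-displacement bijection to the monotone one. Your observation that $g$ need not be order-preserving, and that a counting argument neutralises this, is exactly the point the paper sweeps under the rug. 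So your approach buys a self-contained proof from the paper's own Definition~1, at the cost of a paragraph the paper chose to omit; the paper's version is shorter but relies on the reader accepting the ``recall'' as folklore.
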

\begin{proof}
   Recall that the geometric representation of $\bu$, that is, the set $\{x_n : n\in\Z\}$ is BDL to a lattice $\eta\Z$ if there exists $C > 0$ such that $|x_n- \eta n| < C$ for all $n\in\Z$. Since $x_n=(\ell_1,\ldots,\ell_d)\Parikh[n](\bu)$ and $n=(1,\ldots,1)\Parikh[n](\bu)$ we have for all $n\in\Z$
   \[
   |x_n-\eta n| = \big|(\ell_1,\ldots,\ell_d)\Parikh[n](\bu)-\eta(1,\ldots,1)\Parikh[n](\bu)\big| = \big|(\ell_1-\eta,\ldots,\ell_d-\eta)\Parikh[n](\bu)\big|.
   \]
\end{proof}

As a corollary we can derive another -- more geometric -- reformulation.  Let us recall that the
euclidean distance between a point $\vec{y}\in \R^d$ and a hyperplane $H\subset\R^d$ containing
$\vec{0}$ with unit normal vector $\vec{a}\neq\vec{0}$ is equal to
\[
\varrho(H, \vec{y})={|\vec{a}^{\,\mathrm{T}}\vec{y}|}.
\]

\begin{prop}\label{nadrovinaH}
  Let $\bu$ be a bi-directional infinite word over an alphabet $\A=\{1,\ldots,d\}$. Then $\bu$ has a
  non-trivial geometric representation which is BDL if and only if there is a hyperplane
  $H\subset\R^d$ and a constant $D>0$ such that
  \[
  \varrho(H,\Parikh[n](\bu))<D\qquad\text{for all $n\in\Z$.}
  \]
\end{prop}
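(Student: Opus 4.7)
The proposition is a geometric reformulation of Lemma~\ref{l:vrstva}. The plan is to observe that for any nonzero vector $\vec{v}:=(\ell_1-\eta,\ldots,\ell_d-\eta)\in\R^d$ the quantity $|\vec{v}^{\mathrm{T}}\Parikh[n](\bu)|$ equals $\|\vec{v}\|$ times the Euclidean distance from $\Parikh[n](\bu)$ to the hyperplane $H:=\{\vec{y}\in\R^d:\vec{v}^{\mathrm{T}}\vec{y}=0\}$. The lemma's algebraic bound thus translates directly into a geometric distance bound, and the proof consists of checking both implications using this correspondence between pairs $(\ell_1,\ldots,\ell_d,\eta)$ and hyperplanes through the origin equipped with a unit normal.

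For the $(\Rightarrow)$ direction, suppose $\bu$ has a non-trivial BDL representation with lengths $\ell_1,\ldots,\ell_d$ equivalent to $\eta\Z$. Lemma~\ref{l:vrstva} provides a constant $C$ such that $|\vec{v}^{\mathrm{T}}\Parikh[n](\bu)|<C$ for $\vec{v}:=(\ell_1-\eta,\ldots,\ell_d-\eta)$. Non-triviality forces at least two of the $\ell_i$ to differ, so $\vec{v}\neq\vec{0}$ (if $\vec{v}=\vec{0}$ then every $\ell_i=\eta$, a contradiction). Taking $H:=\{\vec{y}:\vec{v}^{\mathrm{T}}\vec{y}=0\}$ with unit normal $\vec{v}/\|\vec{v}\|$ then yields $\varrho(H,\Parikh[n](\bu))=|\vec{v}^{\mathrm{T}}\Parikh[n](\bu)|/\|\vec{v}\|<C/\|\vec{v}\|=:D$.

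For the $(\Leftarrow)$ direction, given a hyperplane $H$ through the origin with unit normal $\vec{a}=(a_1,\ldots,a_d)$ satisfying $|\vec{a}^{\mathrm{T}}\Parikh[n](\bu)|<D$, I would manufacture positive lengths $\ell_1,\ldots,\ell_d$, not all equal, and a real $\eta$ with $(\ell_1-\eta,\ldots,\ell_d-\eta)$ proportional to $\vec{a}$. The key observation---and really the only nonroutine step---is that $\vec{a}$ cannot be proportional to $(1,\ldots,1)$: since the coordinate sum of $\Parikh[n](\bu)$ equals $n$, such a normal would give $\varrho(H,\Parikh[n](\bu))=|n|/\sqrt{d}$, contradicting the bound. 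Hence the components of $\vec{a}$ are not all equal; choosing any $\eta>\max_i|a_i|$ and setting $\ell_i:=\eta+a_i$ produces positive lengths (not all equal) with $(\ell_1-\eta,\ldots,\ell_d-\eta)=\vec{a}$, and Lemma~\ref{l:vrstva} delivers the BDL property for the resulting representation equivalent to $\eta\Z$. Apart from ruling out the constant normal direction, everything reduces to a transparent rewriting of Lemma~\ref{l:vrstva}.
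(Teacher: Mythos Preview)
Your proposal is correct and follows essentially the same route as the paper: both directions are derived from Lemma~\ref{l:vrstva} via the identity $\varrho(H,\Parikh[n](\bu))=|\vec{v}^{\mathrm{T}}\Parikh[n](\bu)|/\|\vec{v}\|$, and in the $(\Leftarrow)$ direction both you and the paper rule out the normal direction $(1,\ldots,1)$ using $(1,\ldots,1)\Parikh[n](\bu)=n$, then shift the unit normal by a sufficiently large $\eta$ to obtain positive, not-all-equal lengths. Your write-up is slightly more explicit (you justify $\vec{v}\neq\vec{0}$ and give the concrete choice $\eta>\max_i|a_i|$), but there is no substantive difference.
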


\begin{proof}
  $\Rightarrow$: By Lemma~\ref{l:vrstva}, there exists a non-zero vector
  $\vec{f}:=(\ell_1-\eta,\ell_2-\eta,\ldots,\ell_d-\eta)^{\mathrm{T}}$ and a constant $C$ such that
  the distance between $\Parikh[n](\bu)$ and the hyperplane
  \[
  H := \big\{ \vec{x}\in\R^d : \vec{f}^{\:\mathrm{T}}\vec{x} = 0 \big\}
  \]  is bounded by $D:=C /\|\vec{f}\|$.  
    
  $\Leftarrow$: Let $\vec{h} = (h_1,h_2, \ldots, h_d)^{\mathrm{T}}$ be a normal vector of the
  hyperplane $H$ such that $\|\vec{h}\| = 1$. Then $\vec{h}$ has at least two distinct
  components. Indeed, if $\vec{h}$ was equal to $\frac{1}{\sqrt{d}}(1,1,\ldots, 1)^{\mathrm{T}}$,
  the sequence $\varrho(H,\Parikh[n](\bu)) = \frac{1}{\sqrt{d}} n$ would not be bounded.  To define
  a non-trivial geometrical representation of $\bu$, we find $\eta\in\R$ such that all components of
  the vector $ \vec{h} + \eta(1,1,\ldots, 1)^{\mathrm{T}}$ are positive and we assign to each letter $a
  \in \A$ the length $\ell_a = h_a+\eta$.  Obviously, at least two lengths differs. Our choice of
  lengths guarantees that
  \[
  \big|(\ell_1-\eta,\dots,\ell_d-\eta)\Parikh[n](\bu)\big| =
  \big|\vec{h}^{\,\mathrm{T}}\Parikh[n](\bu)\big| < D
  \qquad\text{ for every } n\in\Z.
  \]
  By Lemma~\ref{l:vrstva} the geometrical representation of $\bu$ is bounded distance equivalent to
  the lattice $\eta \Z$.
\end{proof}

\section{The BDL Property of Fixed Points of Substitutions}

Our aim here is to describe fixed points of substitutions having non-trivial geometrical representation which is bounded distance equivalent to a lattice.  A sufficient condition for this property is given by following proposition taken from~\cite{AmMaPe-BDL1}.

\begin{prop}\label{prop:BDL_repr_of_fixed_pointSufficient}
  Let $\varphi$ be a substitution over an alphabet $\A$ and suppose that its incidence matrix has at
  least one eigenvalue in modulus less than 1.  Let $\bu$ be a bidirectional fixed point of
  $\varphi$. Then there exists a non-trivial geometric representation of $\bu$ which is bounded
  distance equivalent to a lattice.
\end{prop}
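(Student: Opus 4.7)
The plan is to apply Proposition~\ref{nadrovinaH}: it suffices to find a hyperplane $H\subset\R^d$ and a constant $D>0$ with $\varrho(H,\Parikh[n](\bu))<D$ for every $n\in\Z$. Let $\lambda$ be an eigenvalue of $M_\varphi$ with $|\lambda|<1$ and let $\vec{h}\in\mathbb{C}^d\setminus\{\vec{0}\}$ be a corresponding left eigenvector, so $\vec{h}^{\mathrm{T}}M_\varphi=\lambda\vec{h}^{\mathrm{T}}$. First observe that $\vec{h}$ cannot be a complex multiple of $(1,\ldots,1)^{\mathrm{T}}$: if it were, then $\vec{h}^{\mathrm{T}}M_\varphi=c(|\varphi(a_1)|,\ldots,|\varphi(a_d)|)$ would force $|\varphi(a_i)|=\lambda$ for every $i$, contradicting $|\varphi(a_i)|\geq 1>|\lambda|$. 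Consequently at least one of $\vec{h}_R:=\mathrm{Re}\,\vec{h}$ and $\vec{h}_I:=\mathrm{Im}\,\vec{h}$ is a non-zero real vector not proportional to $(1,\ldots,1)^{\mathrm{T}}$.

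The heart of the argument is to bound $|\vec{h}^{\mathrm{T}}\Parikh[n](\bu)|$ uniformly in $n$ via the self-similarity $\bu=\varphi(\bu)$. For $n>0$, decompose the prefix uniquely as
\[
u_{[0,n)}=\varphi(u_{[0,m)})\cdot s,\qquad s \text{ a proper prefix of } \varphi(u_m).
\]
Taking Parikh vectors yields $\Parikh[n](\bu)=M_\varphi\Parikh[m](\bu)+\Parikh(s)$, so that multiplying by $\vec{h}^{\mathrm{T}}$ gives the recurrence
\[
\vec{h}^{\mathrm{T}}\Parikh[n](\bu)=\lambda\,\vec{h}^{\mathrm{T}}\Parikh[m](\bu)+\vec{h}^{\mathrm{T}}\Parikh(s).
\]
There are only finitely many proper prefixes of the words $\varphi(c)$, $c\in\A$, so $|\vec{h}^{\mathrm{T}}\Parikh(s)|\leq K$ for some constant $K$. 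Because $u_0=a$ and $\varphi(a)=aw$ has length at least $2$, the decomposition strictly decreases the index, $m<n$. Iterating, we produce a strictly decreasing sequence $n>m_0>m_1>\cdots$ reaching $0$ in finitely many steps, at which point $\Parikh[0](\bu)=\vec{0}$. Telescoping gives the geometric estimate
\[
\bigl|\vec{h}^{\mathrm{T}}\Parikh[n](\bu)\bigr|\leq K\sum_{j\geq 0}|\lambda|^j=\frac{K}{1-|\lambda|}.
\]
The case $n<0$ is analogous, using $u_{[n,0)}=s'\cdot\varphi(u_{[m,0)})$ with $s'$ a proper suffix of $\varphi(u_{m-1})$; the strict decrease $|m|<|n|$ comes from $u_{-1}=b$ together with $|\varphi(b)|=|vb|\geq 2$.

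Since $\Parikh[n](\bu)$ is a real vector, the bound above implies the same bound for $|\vec{h}_R^{\mathrm{T}}\Parikh[n](\bu)|$ and $|\vec{h}_I^{\mathrm{T}}\Parikh[n](\bu)|$. I then take $\vec{h}'\in\{\vec{h}_R,\vec{h}_I\}$ to be non-zero and not proportional to $(1,\ldots,1)^{\mathrm{T}}$; the hyperplane $H=\{\vec{x}\in\R^d:(\vec{h}')^{\mathrm{T}}\vec{x}=0\}$ then meets the hypothesis of Proposition~\ref{nadrovinaH}, which produces the desired non-trivial BDL geometric representation. The main obstacle I anticipate is the careful bookkeeping that makes the iterative decomposition terminate with strictly decreasing indices; this is exactly where the defining property of the fixed-point anchors $a$ and $b$ (whose images have length at least $2$) is essential — without it, letters $c$ with $\varphi(c)=c$ could stall the recursion and invalidate the geometric-series estimate.
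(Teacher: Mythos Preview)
The paper does not actually prove this proposition here: it is quoted as a result ``taken from~\cite{AmMaPe-BDL1}'', so there is no in-paper argument to compare against. Your proof is the natural one and is correct. The key recursion $\vec{h}^{\mathrm{T}}\Parikh[n](\bu)=\lambda\,\vec{h}^{\mathrm{T}}\Parikh[m](\bu)+\vec{h}^{\mathrm{T}}\Parikh(s)$ together with the geometric-series bound is exactly the right mechanism, and your treatment of the complex case via $\vec{h}_R,\vec{h}_I$ is fine (your case analysis that at least one of them is simultaneously non-zero and not proportional to $(1,\dots,1)^{\mathrm{T}}$ goes through).

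Two small remarks. First, your justification that $m<n$ tacitly uses that $\varphi$ is non-erasing (so that $|\varphi(u_{[0,m)})|\geq m$, with strict inequality forced by $u_0=a$ and $|\varphi(a)|\geq 2$); the paper's definition of ``substitution'' does not state this explicitly, but it is the standard convention and is clearly intended. Second, in your argument that $\vec{h}$ is not a multiple of $(1,\dots,1)^{\mathrm{T}}$ you write $|\varphi(a_i)|\geq 1$ for every $i$; what you really need (and what you have) is that \emph{some} column sum of $M_\varphi$ is at least $2$, namely $|\varphi(a)|\geq 2$, which already rules out all column sums being equal to $\lambda$ with $|\lambda|<1$. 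With these cosmetic clarifications your proof is complete.
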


A counterpart of the previous result is the following necessary condition.  

\begin{thm}
\label{prop:BDL_repr_of_fixed_pointNecessary}
  Let $\bu$ be a fixed point of a primitive substitution $\varphi$ over an alphabet $\A$.  If there
  exists a non-trivial geometric representation of $\bu$ which is bounded distance equivalent to a
  lattice, then at least one eigenvalue in modulus is less or equal to 1.
\end{thm}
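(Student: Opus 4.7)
The plan is to argue by contraposition: I assume every eigenvalue of $M_\varphi$ has modulus strictly greater than $1$ and show that no non-trivial BDL geometric representation of $\bu$ can exist. By Proposition~\ref{nadrovinaH}, such a representation exists exactly when there is a non-zero $\vec{f}\in\R^d$ and a constant $D$ with $|\vec{f}^{\,\mathrm{T}}\Parikh[n](\bu)|<D$ for every $n\in\Z$. Thus my aim is to prove that, under the eigenvalue assumption, the linear subspace
\[
W := \{\vec{g}\in\R^d : \sup_{n\in\Z}|\vec{g}^{\,\mathrm{T}}\Parikh[n](\bu)|<\infty\}
\]
is reduced to $\{\vec{0}\}$.

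The key structural observation is that $W$ is invariant under $M_\varphi^{\mathrm{T}}$ and that the uniform bound is preserved under iteration. Indeed, $\varphi(\bu)=\bu$ implies that $\varphi$ sends every prefix $u_{[0,n)}$ of $\bu$ to another prefix $u_{[0,N)}$, and analogously for the negative side, so $M_\varphi\Parikh[n](\bu)=\Parikh[N](\bu)$ for some $N=N(n)\in\Z$. Consequently, if $|\vec{g}^{\,\mathrm{T}}\Parikh[n](\bu)|<C$ holds for all $n$, then the same constant $C$ bounds $|((M_\varphi^{\mathrm{T}})^k\vec{g})^{\mathrm{T}}\Parikh[n](\bu)|=|\vec{g}^{\,\mathrm{T}}\Parikh[N'(n,k)](\bu)|$ uniformly in $k$ and $n$.

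The central step is then a compactness argument. Suppose towards a contradiction that $W\neq\{\vec{0}\}$ and pick a non-zero $\vec{f}\in W$. Every eigenvalue of the restriction $M_\varphi^{\mathrm{T}}|_W$ is an eigenvalue of $M_\varphi$ and hence has modulus greater than $1$, so $(M_\varphi^{\mathrm{T}}|_W)^{-1}$ has spectral radius strictly less than $1$, forcing $\|(M_\varphi^{\mathrm{T}})^k\vec{f}\|\to\infty$ as $k\to\infty$. Setting $\vec{u}_k:=(M_\varphi^{\mathrm{T}})^k\vec{f}/\|(M_\varphi^{\mathrm{T}})^k\vec{f}\|$ and extracting a convergent subsequence $\vec{u}_{k_j}\to\vec{u}$ with $\|\vec{u}\|=1$, the uniform bound from the previous step yields
\[
|\vec{u}_{k_j}^{\,\mathrm{T}}\Parikh[n](\bu)| \;\leq\; C/\|(M_\varphi^{\mathrm{T}})^{k_j}\vec{f}\| \;\longrightarrow\; 0
\]
for each fixed $n$, so $\vec{u}^{\,\mathrm{T}}\Parikh[n](\bu)=0$ for every $n\in\Z$.

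To close the argument, I would use $\Parikh[n+1](\bu)-\Parikh[n](\bu)=\vec{e}_{u_n}$ to deduce $\vec{u}^{\,\mathrm{T}}\vec{e}_{u_n}=0$ for every $n$. Primitivity of $\varphi$ guarantees that every letter of $\A$ appears in $\bu$, since for $k$ large enough that $M_\varphi^k$ is strictly positive the prefix $\varphi^k(u_0)$ of $\bu$ already contains each letter. Hence every coordinate of $\vec{u}$ vanishes, contradicting $\|\vec{u}\|=1$. The main obstacle I foresee is making the uniform-bound preservation in the invariance step precise, especially reconciling the positive and negative prefix cases; once that self-similar rigidity is in hand, the rest is a clean spectral-radius contradiction.
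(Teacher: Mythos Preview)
Your argument is correct. It shares the paper's overall structure---use the fixed-point self-similarity to control the orbit $(M_\varphi^{\mathrm T})^k\vec f$, then invoke expansiveness of $M_\varphi$ to force $\vec f=\vec 0$---but the two executions differ. The paper works with \emph{factors}: since every $\varphi^n(a)$ is a factor of $\bu$ and any factor is a difference of two prefixes, one gets $|\vec f^{\,\mathrm T}M_\varphi^n\vec e_a|<2D$ for all $n$ and all $a\in\A$, i.e.\ the whole sequence $(M_\varphi^n)^{\mathrm T}\vec f$ lies in a fixed cube $\mathcal C$. The contradiction is then a one-line adapted-norm computation: with $A=(M_\varphi^{-1})^{\mathrm T}$ contracting, the identity $\vec f=A^n\bigl((M_\varphi^n)^{\mathrm T}\vec f\bigr)$ together with $A^n(\mathcal C)\to\{\vec 0\}$ gives $\vec f=\vec 0$ directly. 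You instead use that $\varphi$ maps \emph{prefixes} to prefixes to obtain the uniform bound $\bigl|((M_\varphi^{\mathrm T})^k\vec f)^{\mathrm T}\Parikh[n](\bu)\bigr|<C$ in both $k$ and $n$, then normalise, extract a subsequential unit limit $\vec u$, and deduce $\vec u\perp\vec e_a$ for every letter $a$. Your route is a little longer but makes the prefix self-similarity very transparent; the paper's is more economical and avoids the compactness extraction. Two small remarks: the worry about the negative-index case is harmless, since $\varphi(u_{[n,0)})=u_{[N,0)}$ gives $M_\varphi\Parikh[n](\bu)=\Parikh[N](\bu)$ just as for $n\geq 0$; and the invariant subspace $W$ is not really needed, because under the hypothesis $(M_\varphi^{\mathrm T})^{-1}$ already contracts on all of $\R^d$, so $\|(M_\varphi^{\mathrm T})^k\vec f\|\to\infty$ for every nonzero $\vec f$.
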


\begin{proof}  
  By Proposition \ref{nadrovinaH} there exist a constant $D$ and a hyperplane $H$ such that
  $\varrho(H,\Parikh[n](\bu))<D$ for all $n\in\Z$. If we denote $\vec{f}$ the unit normal vector of
  $H$, then equivalently $|\vec{f}^{\:\mathrm{T}}\Parikh[n](\bu)| < D$ for all $n\in\Z$.

  Let $w$ be a factor of $\bu$.
  As $\bu$ is a fixed point of a primitive substitution, $w$
  occurs in $\bu$ infinitely many times. Hence 
  there are $n, m \in \N$, $m<n$ such that we can write $u_{[0,n)} = u_{[0,m)}w$. The Parikh
  vector $\Parikh(w)$ of the factor $w$ satisfies $\Parikh(w) = \Parikh[n](\bu) - \Parikh[m](\bu)$,
  and consequently
  $|\vec{f}^{\:\mathrm{T}}\Parikh(w)| \leq |\vec{f}^{\:\mathrm{T}}\Parikh[n](\bu)| +
  |\vec{f}^{\:\mathrm{T}}\Parikh[m](\bu)| < 2D$.
  Therefore
  \begin{equation}\label{kazdyFaktor}
    \big|\vec{f}^{\:\mathrm{T}}\Parikh(w)\big| = \big|\Parikh(w)^{\mathrm{T}} \vec{f}\big| < 2D
    \quad \text{for each factor $w \in \mathcal{L}(\bu)$}.    
  \end{equation}
  The language of the primitive substitution $\varphi$ over $\A$ contains all letters of the
  alphabet $\A$ and all iterations of its images, i.e., $\varphi^n(a) \in \mathcal{L}(\bu)$ for each
  $a \in \A$ and $n \in \N$. As $\Parikh(\varphi^n(a)) = M_{\varphi}^n\Parikh(a)$, the equation
  \eqref{kazdyFaktor} implies that all coordinates of the vector
  $(M^n_\varphi)^{\mathrm{T}}\!\vec{f}$ are bounded in modulus by $2D$.  In other words, the
  vectors $(M^n_\varphi)^{\mathrm{T}}\!\vec{f}$ lie in a cube $\mathcal{C}$ centered at the origin.

  Assume, contrary to what we want to prove, that the matrix $M_\varphi$ is expansive, i.e., all its
  eigenvalues are in modulus strictly bigger than 1. This implies that
  $A:=(M_\varphi^{-1})^{\mathrm{T}}$ is contracting, i.e., all eigenvalues of $A$ are strictly
  smaller than $1$. Let us choose $\beta$ satisfying
  \[
  1 > \beta > \max\big\{|\lambda|:\lambda \text{ an eigenvalue  of } A \big\}.
  \]
  A well-know result (see for example~\cite{isaacson}, Theorem 3) says that there exists a norm
  $\|\cdot\|_{\beta}$ of $\R^d$ such that $\|A \vec{x}\|_{\beta}\leq \beta \|\vec{x}\|_{\beta}$ for each
  $\vec{x} \in \R^d$. By induction on $n$ we deduce that $\|A^n
  \vec{x}\|_{\beta}\leq\beta^n\|\vec{x}\|_{\beta}$. As all norms on a finite dimensional vector
  space are equivalent, the cube $\mathcal{C}$ is a bounded set also in the norm $\|\cdot\|_{\beta}$,
  i.e., there exists a constant $K$ such that $\|\vec{x} \|_{\beta}\leq K$ for each
  $\vec{x}\in\mathcal{C}$.

  Let us consider the sequence $x_n := (M^n_\varphi)^{\mathrm{T}}\!\vec{f}$ of vectors belonging to
  the cube $\mathcal{C}$.  Then
  \begin{equation}\label{eq:Anxn}
  \|A^n x_n\|_{\beta}\leq \beta^n \|x_n\|_{\beta} \leq K \beta^n \qquad\text{and}\qquad
  A^n x_n = \big((M_\varphi^{-1})^{\mathrm{T}}\big)^n(M^n_\varphi)^{\mathrm{T}}\vec{f} = \vec{f} 
  \quad \text{for each } n \in \N.
  \end{equation}
  Since $0<\beta < 1$, $\lim\limits_{n\to \infty}K\beta^n = 0$.  Formulae~\eqref{eq:Anxn} imply
  $\vec{f} = \vec{0}$; a contradiction as $\vec{f}$ is a normal vector of the hyperplane $H$.
\end{proof}

Let us show that the necessary condition for the BDL Property we have demonstrated in the previous
proposition is not sufficient.

\begin{ex}   
  Let us consider  the primitive substitution  $\psi: \{A,B,C\}^* \to \{A,B,C\}^*$ given by 
  \[
  A\mapsto BBBCCC, \quad B\mapsto BACCB, \quad \text{and} \quad C\mapsto ABBBC.
  \]  
  We will show that the fixed point 
  $\bu=\cdots ABBBCABBBCBABBA|BACCBBBBCCCABBC\cdots$
  of $\psi$ has no
  non-trivial geometrical representation, although one eigenvalue of 
  $M_{\psi}$ is in modulus $1$.

  The matrix of $\psi$  and its eigenvalues  are  
  \[
  M_\psi = \begin{pmatrix} 0&1&1 \\ 3&2&3  \\ 3&2&1  \\ \end{pmatrix}
    \quad \text{and}\quad \lambda_1=2+\sqrt{10},\ \lambda_2=2-\sqrt{10},\ \lambda_3=-1. 
  \]
  The Parikh vectors of the factors $\psi^n(A)$, $\psi^n(B)$, and $\psi^n(C)$ equal to the first, second, and third column of the matrix $ M_{\psi}^n$, respectively.

  Let us assume that the fixed point $\bu$ of $\psi$ has the BDL Property. Let $\vec{f}$ be a normal vector of a hyperplane $H$ from Proposition~\ref{nadrovinaH}, then the coordinates of the vectors $(M^n_\psi)^{\mathrm{T}}\!\vec{f}$ are be bounded (cf.~proof of Proposition~\ref{prop:BDL_repr_of_fixed_pointNecessary}). Hence the product $\vec{x}^{\:\mathrm{T}}(M^n_\psi)^{\mathrm{T}}\vec{f}$ is bounded for each $\vec{x}\in \R^3$. In particular, if $\vec{x}$ is an eigenvector of $M_\psi$ to the eigenvalue $\lambda$, then $\vec{x}^{\:\mathrm{T}}(M^n_\psi)^{\mathrm{T}}\vec{f} =\lambda^n\vec{x}^{\:\mathrm{T}}\vec{f}$ is bounded, and thus $\vec{x}^{\:\mathrm{T}}\vec{f} = 0$ for each eigenvector $\vec{x}$ of $M_\psi$ corresponding to an eigenvalue in modulus bigger than $1$.  The eigenvectors of $M_\psi$ corresponding to $\lambda_1$ and $\lambda_2$ are $(1,3, \sqrt{10}-1)^{\mathrm{T}}$ and $(1,3,-\sqrt{10}-1)^{\mathrm{T}}$, respectively. Therefore the only candidate (up to a scalar multiple) for $\vec{f}$ is $\vec{f} = (3, -1,0)^{\mathrm{T}}$.

  As $M_{\psi}$ is diagonalizable, we can write $M_{\psi}=R\,\mathrm{diag}(\lambda_1,\lambda_2,\lambda_3)R^{-1}$, where columns of the matrix $R$ are formed by eigenvectors of $M_\psi$. Therefore
  \[ 
  \big(M_\psi^n\big)^{\mathrm{T}}\vec{f} = \big(R^{-1}\big)^{\mathrm{T}}
  \mathrm{diag}(\lambda^n_1, \lambda^n_2, \lambda^n_3)R^{\mathrm{T}}\vec{f} =
  \big(R^{-1}\big)^{\mathrm{T}} \big(0,0,(-1)^{n+1}\big)^{\mathrm{T}} = \big(3(-1)^n,(-1)^{n+1}, 0\big)^{\mathrm{T}}. 
  \]
  In particular, for the Parikh vectors of the factors  $\psi^n(A)$, $\psi^n(B)$, and $\psi^n(C)$ we get 
  \begin{equation}\label{mocniny}
    \vec{f}^{\:\mathrm{T}}\Parikh(\psi^n(A)) = 3(-1)^n,
    \quad \vec{f}^{\:\mathrm{T}}\Parikh(\psi^n(B)) = (-1)^{n+1}, \quad \text{and}
    \quad \vec{f}^{\:\mathrm{T}}\Parikh(\psi^n(C)) = 0.
  \end{equation}
  The form of $\psi$ implies two simple claims:
  \begin{itemize}
  \item 
    if $wB$ is a prefix of $u_{[0,\infty)}$, then $\psi(w)BAC$ is a prefix of $u_{[0,\infty)}$,
  \item 
    if $wC$ is a prefix of $u_{[0,\infty)}$, then $\psi(w)ABBB$ is a prefix of $u_{[0,\infty)}$. 
  \end{itemize}
  Starting with the prefix $B$ and applying alternatively these claims we find out that 
  \[ 
  F_k : = \psi^{2k}(BA)\psi^{2k-1}(ABB)\psi^{2k-2}(BA)\psi^{2k-3}(ABB)\cdots\psi^2(BA) \psi(ABB)BAC
  \]
  is a prefix of $u_{[0,\infty)}$ for each $k\in\N$. As $\Parikh(uv)=\Parikh(u)+\Parikh(v)$ for any two words
  $u,v$, the Parikh vector of $F_k$ is
  \[
  \Parikh(F_k) = \sum_{i=0}^{2k}\Parikh(\psi^i(A)) + \sum_{i=0}^{2k}\Parikh(\psi^i(B)) +
  \sum_{i=1}^{k}\Parikh(\psi^{2i-1}(B)) + \Parikh(C).
  \]
  To compute $\vec{f}^{\:\mathrm{T}}\Parikh(F_k)$, we use equalities~\eqref{mocniny}
  \[
  \vec{f}^{\:\mathrm{T}}\Parikh(F_k) =  3\sum_{i=0}^{2k}(-1)^i + \sum_{i=0}^{2k}(-1)^{i+1} +\sum_{i=1}^{k}(-1)^{2i} = k+2\,.
  \]
  
  To sum up: first, we showed that $\vec{f} = (3, -1,0)^{\mathrm{T}}$ is the only candidate for a
  vector such that $|\vec{f}^{\:\mathrm{T}}\Parikh[n](\bu)|<D$ for some constant $D$. 
  Then, we found a sequence $(F_k)_{k\geq 0}$ of prefixes of $u_{[0,\infty)}$ such that
  $\lim_{k\to\infty}\vec{f}^{\:\mathrm{T}}\Parikh(F_k) = \infty$. Therefore by
  Proposition~\ref{nadrovinaH} the fixed point $\bu$ of $\psi$ admits no non-trivial geometrical
  representation.
\end{ex}

\section{Morphic images of words having BDL property}   

In our previous paper~\cite{AmMaPe-BDL1} we showed that if an infinite word $\bu$ is balanced, then its geometric representation has property BDL. Moreover, we proved that the balancedness is preserved under the image by a morphism, that is, $\psi(\bu)$ is balanced as well for any morphism $\psi:\A^*\to\B^*$. In particular, a geometric representation of $\psi(\bu)$ has property BDL.

In this Section we prove that for an arbitrary infinite word the BDL property of its geometric representation is preserved under the image by a morphism $\psi:\A^*\to\B^*$, provided that the cardinality of $\B$ is greater than or equal to the cardinality of $\A$. We also povide an example showing that this condition on the cardinalities is essential.

\begin{prop}\label{prop:BDL_of_image}
    Let $\bu\in\mathcal{A}^{\Z}$ be an infinite word with a non-trivial BDL geometric representation
    and let $\varphi:\mathcal{A}^*\rightarrow\mathcal{B}^*$ be a morphism such that
    $\#\mathcal{B}\geq\#\mathcal{A}$. Then $\varphi(\bu)$ has a non-trivial BDL geometric
    representation.
\end{prop}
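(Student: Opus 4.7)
The approach is to apply Proposition~\ref{nadrovinaH} in both directions: first extract a bounded hyperplane witness for $\bu$, then transport it through the incidence matrix $M_\varphi$ to produce one for $\varphi(\bu)$. Set $d=\#\A$ and $d'=\#\B$, so that $M_\varphi$ is a $d'\times d$ matrix. By Proposition~\ref{nadrovinaH} there exist a non-zero $\vec{f}\in\R^d$ and a constant $D$ with $|\vec{f}^{\:\mathrm{T}}\Parikh[n](\bu)|<D$ for every $n\in\Z$; the goal is to exhibit a non-zero $\vec{g}\in\R^{d'}$ along which $\Parikh[n](\varphi(\bu))$ stays uniformly bounded.

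The geometric bridge is that every prefix of $\varphi(\bu)$ is a morphic image of a prefix of $\bu$ up to a bounded correction. For $n\geq 0$ one picks $m=m(n)\in\N$ with $\varphi(\bu)_{[0,n)}=\varphi(u_{[0,m)})v$, where $v$ is a proper prefix of $\varphi(u_m)$; for $n<0$ a symmetric decomposition using a proper suffix works. Since $v$ ranges over the finite set of proper prefixes (or suffixes) of the finitely many words $\varphi(a)$, its Parikh vector lies in a finite subset of $\N^{d'}$. Combining this with $\Parikh(\varphi(w))=M_\varphi\Parikh(w)$ gives a uniform decomposition
\[
\Parikh[n](\varphi(\bu)) = M_\varphi \Parikh[m](\bu) + \vec{r}_n, \qquad \|\vec{r}_n\|\leq M,
\]
for some constant $M$ independent of $n\in\Z$.

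It now suffices to find $\vec{g}\in\R^{d'}\setminus\{\vec{0}\}$ and a scalar $c$ with $M_\varphi^{\mathrm{T}}\vec{g}=c\vec{f}$, for then
\[
\big|\vec{g}^{\:\mathrm{T}}\Parikh[n](\varphi(\bu))\big|
=\big|c\,\vec{f}^{\:\mathrm{T}}\Parikh[m](\bu)+\vec{g}^{\:\mathrm{T}}\vec{r}_n\big|
\leq |c|D+M\|\vec{g}\|
\]
is uniformly bounded in $n$. The equation $M_\varphi^{\mathrm{T}}\vec{g}-c\vec{f}=\vec{0}$ is a system of $d$ linear equations in the $d'+1$ unknowns $(\vec{g},c)$, and the hypothesis $d'\geq d$ yields $d'+1>d$, so a non-trivial solution exists. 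A solution with $\vec{g}=\vec{0}$ would force $c\vec{f}=\vec{0}$, hence $c=0$ (since $\vec{f}\neq\vec{0}$), contradicting non-triviality, so $\vec{g}\neq\vec{0}$. Proposition~\ref{nadrovinaH} applied backwards to the hyperplane of $\R^{d'}$ with normal $\vec{g}$ then delivers the desired non-trivial BDL representation; non-triviality is automatic, since $\vec{g}$ cannot be parallel to $(1,\ldots,1)^{\mathrm{T}}$ — otherwise $\vec{g}^{\:\mathrm{T}}\Parikh[n](\varphi(\bu))$ would be a non-zero multiple of $n$ and thus unbounded.

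The substantive step is the linear-algebra dimension count that guarantees $\vec{g}\neq\vec{0}$; this is precisely where the hypothesis $\#\B\geq\#\A$ enters, and the rest of the argument is bookkeeping of Parikh vectors.
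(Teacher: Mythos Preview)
Your proof is correct and follows essentially the same line as the paper's: decompose each prefix Parikh vector of $\varphi(\bu)$ as $M_\varphi\Parikh[m](\bu)$ plus a bounded remainder, then use the dimension hypothesis to transport the hyperplane witness. The only cosmetic difference is that the paper works directly with hyperplanes (choosing any hyperplane $H_{\B}\subset\R^{d'}$ containing the at most $(d-1)$-dimensional subspace $M_\varphi H_{\A}$), whereas you dualize and solve $M_\varphi^{\mathrm{T}}\vec{g}=c\vec{f}$ for a normal vector; these are the same step, since $M_\varphi H_{\A}\subset H_{\B}$ is equivalent to $M_\varphi^{\mathrm{T}}\vec{g}\in\R\vec{f}$.
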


\begin{proof}
    Let $d_{\mathcal{A}}:=\#\mathcal{A}$ and $d_{\mathcal{B}}:=\#\mathcal{B}$, and let
    $H_{\mathcal{A}}\subset\R^{d_{\mathcal{A}}}$ be a hyperplane and $C_{\mathcal{A}}$ a constant
    such that
    $\rho(H_{\mathcal{A}},\Parikh[n](\bu))<C_{\mathcal{A}}$.
    Thus for any $n\in\N$ we can write
    \begin{equation}\label{eq:rozklad_Pnu}
    \Parikh[n](\bu) = y_n + z_n,\qquad\text{where } y_n\in H_{\mathcal{A}}
    \text{ and }\|z_n\|<C_{\mathcal{A}}.
    \end{equation}
    Let $\boldsymbol{w}=\varphi(\bu)$, then for a prefix of $\boldsymbol{w}$ of length $m$ we can
    write $w[0,m)=\varphi(u[0,n))v$ for some $n\in\N$ and $v$ a prefix of $\varphi(a)$,
        $a\in\mathcal{A}$. Thus
    \[
    \Parikh[m](\boldsymbol{w})=M_{\varphi}\Parikh[n](\bu)+\Parikh(v) =
    M_{\varphi}y_n + M_{\varphi}z_n + \Parikh(v).
    \]
    Let $H_{\mathcal{B}}\subset\R^{d_{\mathcal{B}}}$ be a hyperplane containing the subspace
    $M_{\varphi}H_{\mathcal{A}}$. Such a hyperplane surely exists since $\dim
    M_{\varphi}H_{\mathcal{A}}\leq\dim H_{\mathcal{A}}= d_{\mathcal{A}}-1$ and, by assumption,
    $d_{\mathcal{A}}\leq d_{\mathcal{B}}$. Finally, as $M_{\varphi}y_n\in H_{\mathcal{B}}$ the
    distance between $\Parikh[m](\boldsymbol{w})$ and $H_{\mathcal{B}}$ is
    \[
    \varrho\bigl(H_{\mathcal{B}},\Parikh[m](\boldsymbol{w})\big) =
    \inf\big\{\|\Parikh[m](\boldsymbol{w})-x\|:x\in H_{\mathcal{B}}\big\}
    \leq \|M_{\varphi}z_n+\Parikh(v)\|<C_{\mathcal{B}},
    \]
    for some constant $C_{\mathcal{B}}$. The last inequality follows from the fact that 
    $\|z_n\|<C_{\mathcal{A}}$.
\end{proof}

We will show that the condition on the size of the alphabets in
Proposition~\ref{prop:BDL_of_image} is necessary.

\begin{ex}
  Let $\bu$ be a fixed point of substitution $\psi:\{A,B,C\}^*\to\{A,B,C\}^*$ given by
  \[
  A \mapsto A^2B^3C^4,\quad B \mapsto A^2B,\quad C\mapsto A^2.
  \]
  The matrix of $\psi$ and its eigenvalues are
  \[
  M_\psi = \begin{pmatrix} 2 & 2 & 2 \\ 3 & 1 & 0  \\ 4 & 0 & 0  \\ \end{pmatrix}
  \quad \text{and}\quad
  \lambda_1\approx5.0593,\ \lambda_2\approx-2.6549,\ \lambda_3\approx0.5956.
  \]
  By Proposition~\ref{prop:BDL_repr_of_fixed_pointSufficient} the fixed point $\bu$ has a BDL
  geometric representation.

  Let $\varphi:\{A,B,C\}^*\to\{A,B\}^*$ be the morphism given by
  $\varphi(A)=A$, $\varphi(B)=B$, $\varphi(C)=\epsilon$. We show by contradiction that
  $\varphi(\bu)$ does not admit a BDL geometrical representation.

  Let us assume that $\varphi(\bu)$ has a BDL geometric representation. Then (by
  Proposition~\ref{nadrovinaH}) there is a non-zero vector
  $\vec{f}=\left(\begin{smallmatrix}f_1\\f_2\end{smallmatrix}\right)$ and a constant $C$ such that
  $\vec{f}^{\:\mathrm{T}}\Parikh[n](\varphi(\bu))$ is bounded by $C$ for every $n\in\Z$. Let us
  consider $w_n$, 
  factors of $\varphi(\bu)$, given by $w_n=\varphi(\psi^n(A))$ for all
  $n\in\N$. Parikh vectors of these factors are
  \[
  \Parikh(w_n) = M_{\varphi} M_{\psi}^n \begin{pmatrix}1\\0\\0\end{pmatrix}.
  \]
  The matrix $M_{\psi}$ is diagonalizable, thus we can write $M_{\psi} = R D R^{-1}$, 
  where $D = \mathrm{diag}(\lambda_1,\lambda_2,\lambda_3)$.
  Let us denote $r_{ij}:=[R]_{ij}$ and $s_{ij}:=[R^{-1}]_{ij}$ and note that for $M_{\psi}$ all these
  elements of matrices $R$ and $R^{-1}$ are non-zero.

  We have 
  \begin{align*}
  \vec{f}^{\:\mathrm{T}}\Parikh(w_n) &= 
  (f_1,f_2)
  \begin{pmatrix}1&0&0\\0&1&0\end{pmatrix} R D^n R^{-1} \begin{pmatrix}1\\0\\0\end{pmatrix} \\
      &= 
      (f_1r_{11}+f_2r_{21}, f_1r_{12}+f_2r_{22}, f_1r_{13}+f_2r_{23})
      D^n \begin{pmatrix}s_{11}\\s_{21}\\s_{31}\end{pmatrix} \\
      &= (f_1r_{11}+f_2r_{21})s_{11}\lambda_1^n + (f_1r_{12}+f_2r_{22})s_{21}\lambda_2^n + (f_1r_{13}+f_2r_{23})s_{31}\lambda_3^n. 
  \end{align*}
  Boundedness of $\vec{f}^{\:\mathrm{T}}\Parikh(w_n)$ implies (besides other things) that
  $f_1r_{11}+f_2r_{21}=0$ and $f_1r_{12}+f_2r_{22}=0$. As the coefficients
  $r_{11},r_{21},r_{12},r_{22}$ are non-zero we get $f_1=f_2=0$, a contradiction.
\end{ex}

\section{Comments}  

In this paper we considered problem when a fixed point of a primitive substitution $\psi$ has a non-trivial geometric representation with BDL property. We showed that the existence of such representation implies that at least one eigenvalue of the incidence matrix of $\psi$ is in modulus less than or equal to one. Several examples we investigated support the conjecture that such geometric representation with BDL property exists if and only if at least one eigenvalue of $M_{\psi}$ is in modulus striclty smaller than 1.

On the other hand, we did not at all considered problem when an infinite word over an alphabet $\A$ has the so-called faithful BDL representation, i.e. when the lengths $\ell_a$ corresponding to letters $a\in\A$ are mutually different. In~\cite{AmMaPe-BDL1} we proved that all balanced words have faithful geometric representation. The existence of faithful representation for other classes of infinite words
is an open question.    
    

\section*{Acknowledgments}

This work was supported by the project CZ.02.1.01/0.0/0.0/16\_019/0000778 from European Regional
Development Fund.


\printbibliography


\end{document}